\newcommand\new[1]{{#1}}
\title{Cores over Ramsey  structures}
\author
{Antoine Mottet}
	\address{Department of Algebra, MFF UK, Sokolovsk\'a 83, 186 00 Praha 8, Czech Republic}
	\email{mottet@karlin.mff.cuni.cz}
	\urladdr{http://www.karlin.mff.cuni.cz/~mottet/}
\author
{Michael Pinsker}
	\address{Institut f\"{u}r Diskrete Mathematik und Geometrie, FG Algebra, TU Wien, Austria, and Department of Algebra, Charles University, Czech Republic}    
	\email{marula@gmx.at}
    \urladdr{http://dmg.tuwien.ac.at/pinsker/}
\thanks{
Antoine Mottet has received funding from the European Research Council
(ERC) under the European Unions Horizon 2020 research and
innovation programme (grant agreement No 771005). Michael Pinsker has received funding from the  Austrian Science Fund (FWF) through  project No P32337, and from the Czech Science Foundation (grant No 18-20123S)}
\theoremstyle{plain}
    \newtheorem{theorem}{Theorem}
    \newtheorem{lemma}[theorem]{Lemma}
    \newtheorem{definition}[theorem]{Definition}
     \newtheorem{question}[theorem]{Question}
 \newcommand{\ignore}[1]{}
\DeclareMathOperator{\Aut}{Aut}
\DeclareMathOperator{\End}{End}
\DeclareMathOperator{\Emb}{Emb}
\newcommand{\sA}{\mathbb A}
\newcommand{\sB}{\mathbb B}
\newcommand{\sC}{\mathbb C}
\newcommand{\sS}{\mathbb S}
\newcommand{\sF}{\mathbb F}
\newcommand{\fM}{\mathscr M}
\newcommand{\fN}{\mathscr N}
\newcommand{\fG}{\mathscr G}
\newcommand{\fS}{\mathscr S}
\newcommand{\cK}{\mathcal K}
\newcommand{\cF}{\mathcal F}
\begin{document}

\maketitle

\begin{abstract}
We prove that if an $\omega$-categorical structure has an $\omega$-categorical homogeneous Ramsey expansion, then so does its model-complete core.
\end{abstract}

\section{Introduction}

\subsection{Model-complete cores} A countable  $\omega$-categorical structure $\sA$ is \emph{model-complete} if the automorphisms of $\sA$ form a dense subset of the self-embeddings of $\sA$ in the topology of pointwise convergence. 
It is a \emph{core} if all its endomorphisms are self-embeddings.
A classical result by Saracino~\cite{Saracino} states that every $\omega$-categorical structure $\sA$ is \emph{embedding-equivalent} to a model-complete structure:  there exists a model-complete structure $\sA'$ such that $\sA$ embeds into $\sA'$ and vice-versa. The structure $\sA'$ is moreover unique to isomorphism, and is called the \emph{model-companion} of $\sA$. 
This result was later subsumed by a result of~\cite{Cores-journal},
where it is proved that every $\omega$-categorical structure $\sA$ is \emph{homomorphically equivalent} to a model-complete core  $\sA'$, i.e., $\sA$ has a homomorphism to $\sA'$ and vice-versa. Again, $\sA'$ is unique up to isomorphism, and called the \emph{model-complete core  of $\sA$}. The latter result was inspired by and has gained considerable importance in applications to \emph{constraint satisfaction problems}. These are certain computational problems associated with relational structures which are invariant under homomorphic equivalence. Much of the algebraic theory of such problems only works for model-complete cores -- see~\cite{BKOPP,BKOPP-equations, wonderland, BartoPinskerDichotomy, Topo}.

One weakness of the proof in~\cite{Cores-journal} as well as of the later proof in~\cite{BKOPP-equations} is that they are non-constructive, and it is often hard to decide what properties  are transferred from $\sA$ to its model-complete core $\sA'$. One important property which \emph{is} known to be preserved is $\omega$-categoricity~\cite{Cores-journal}, and so are \emph{homogeneity} and \emph{finite boundedness}, as can be seen easily from the proofs. 
However, often the structure  $\sA$ of interest does not have these properties itself, but is only first-order definable in a structure $\sB$ which might enjoy them; we then call $\sA$ a \emph{first-order reduct} of $\sB$. It is interesting to note here that the model-complete core $\sA'$ of a structure $\sA$ is sensitive to which of the first-order definable relations of $\sA$ belong to its signature; in particular, even if $\sA$ and $\sB$ are first-order reducts of one another, their model-complete cores might differ. And while the above-mentioned properties are inherited by $\sA'$ from $\sA$  rather straightforwardly, it is less clear whether, say, \new{the model-complete core} of a first-order reduct of a finitely bounded homogeneous structure is again a first-order reduct of a finitely bounded homogeneous structure. This question is of fundamental importance in applications to  constraint satisfaction problems mentioned above, in particular in relation with a complexity dichotomy conjecture of Bodirsky and the second author (see~\cite{BPP-projective-homomorphisms}).

\begin{question}\label{quest:main}
Is the class of first-order reducts of a finitely bounded homogeneous structures closed under taking model-complete cores?
\end{question}

\subsection{Ramsey expansions}
In this article, we shall answer this question for first-order reducts of finitely bounded homogeneous  \emph{Ramsey} structures. This additional assumption might, however, be void since the following problem is open.


\begin{question}\label{quest:Ramsey}
Does every finitely bounded  homogeneous structure have a finitely bounded homogeneous Ramsey expansion? In other words, is every first-order reduct of a finitely bounded homogeneous structure also a  first-order reduct of a finitely bounded  homogeneous Ramsey structure?
\end{question}

This question has, in various formulations and with varying scope, been considered by several authors: by  Bodirsky and the second author when formulating the dichotomy conjecture, and also in the context of a  decidability result for first-order reducts of finitely bounded homogeneous Ramsey structures~\cite{BPT-decidability-of-definability}; and later by  Melleray, Van Th\'{e}, and Tsankov~\cite{MNT15-Polish} in the context and language  of topological dynamics. In~\cite{BodirskyRamsey}, Bodirsky formulates a conjecture claiming a positive answer for homogeneous structures in a finite language,  and extensively argues the importance of this conjecture.  Ne\v{s}et\v{r}il  addressed the question indirectly in the context of the characterisation of Ramsey classes~\cite{RamseyClasses}. Hubi\v{c}ka and Ne\v{s}et\v{r}il then obtained a positive answer for an impressive number of structures~\cite{CSS-Ramsey, HN18-closure-operations}.  In~\cite{EvansHubickaNesetril}, Evans, Hubi\v{c}ka and Ne\v{s}et\v{r}il gave an example of an $\omega$-categorical structure without $\omega$-categorical Ramsey expansion, but the problem as formulated in  Question~\ref{quest:Ramsey} remains open.

Given the importance of Ramsey expansions, 
it is natural to 
investigate how their  existence 
is preserved under various constructions.
The stability of the Ramsey property in this sense is the 
main theme of the survey~\cite{BodirskyRamsey}, where one of the claims is that if $\sA$ has a homogeneous Ramsey expansion by first-order definable relations, then so do  its model-complete core~\cite[Theorem~3.18]{BodirskyRamsey} and its  model-companion~\cite[Theorem~3.15]{BodirskyRamsey} (this turns out to be incorrect, as we are going to show). 
The following problems were left open; we note that the definition of a Ramsey structure  in~\cite{BodirskyRamsey}  slightly differs from ours, and that the questions that follow have been reformulated  to agree with our terminology.
\begin{question}[\new{Questions~7.2,~7.3} in~\cite{BodirskyRamsey}]\label{quest:mc-core-Ramsey} Let $\sA$ be a structure.
\begin{enumerate}
    \item Suppose that $\sA$ has a homogeneous Ramsey expansion with a finite signature. Does the model-complete core of $\sA$ (resp., its model-companion) have such an expansion?
    \item Suppose that $\sA$ has an $\omega$-categorical \new{homogeneous} Ramsey expansion. Does the model-complete core of $\sA$ (resp., its model-companion) have such an expansion?
\end{enumerate}
\end{question}

\subsection{Results}\label{sect:results}

Our main theorem is the following.

\begin{theorem}\label{thm:main}
Let $\sA$ be a first-order reduct of an $\omega$-categorical  homogeneous Ramsey structure $\sB$, and let $\sA'$ be its model-complete core. Then:
\begin{itemize}
    \item $\sA'$ also is a first-order reduct of an $\omega$-categorical homogeneous Ramsey structure $\sB'$ that is a substructure of $\sB$.
    \item If $\sB$ is finitely bounded, then $\sB'$ can be chosen to be finitely bounded as well.
\end{itemize}
\end{theorem}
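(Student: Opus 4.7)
The plan is to realise $\sB'$ as the substructure $\sB|_{B'}$ of $\sB$ induced on the image $B' := e(B)$ of a carefully chosen canonical endomorphism $e$ of $\sA$. To produce $e$, I start from the fact that, since $\sA'$ is the model-complete core of $\sA$, there is an endomorphism $f\colon\sA\to\sA$ whose image, viewed as a substructure of $\sA$, is isomorphic to $\sA'$. Applying the canonisation machinery from the Bodirsky--Pinsker programme (available because $\sB$ is $\omega$-categorical, homogeneous, and Ramsey) to the set $\{\beta\circ f\circ\alpha \mid \alpha,\beta\in\Aut(\sB)\}$, I extract from its topological closure an endomorphism $e$ of $\sA$ that is \emph{canonical with respect to $\sB$}, meaning that the $\Aut(\sB)$-orbit of $e(\bar a)$ depends only on the $\Aut(\sB)$-orbit of $\bar a$. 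Since $\sA'$ is a core, $e(\sA)$ is still isomorphic to $\sA'$, so setting $B':=e(B)$ and $\sB':=\sB|_{B'}$ already gives $\sA|_{B'}\cong\sA'$.

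It then remains to verify the structural properties of $\sB'$. Canonicity ensures that only finitely many $\Aut(\sB)$-orbits of $n$-tuples are realised in $B'$ for each $n$, which, together with the additional symmetry coming from the Ramsey-controlled behaviour of $e$, should yield $\omega$-categoricity of $\sB'$. Homogeneity should follow from a back-and-forth argument that extends a partial isomorphism of $\sB'$ to an automorphism of $\sB$ and then, using canonicity of $e$, adjusts it to stabilise $B'$. The fact that $\sA'$ is a first-order reduct of $\sB'$ is then a consequence of $\omega$-categoricity (Engeler--Svenonius) together with the observation that each relation of $\sA$, restricted to $B'$, is a union of $\Aut(\sB')$-orbits. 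For the Ramsey property, given finite substructures $P\subseteq Q$ of $\sB'$ and a colouring of the $P$-copies in $\sB'$, I would lift the colouring to a colouring of $P$-copies in $\sB$ by composing with $e$, apply Ramsey for $\sB$ to a sufficiently large amalgam in $\mathrm{Age}(\sB)$, and project the resulting monochromatic copy back via $e$ into $B'$. For finite boundedness, if $\sB$ is finitely bounded then $\mathrm{Age}(\sB')$ should be axiomatised by the original finite forbidden substructures of $\sB$ together with finitely many additional configurations expressing ``embeds into $B'$'', the latter being read off from the finite orbit-level data describing~$e$.

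The main obstacle is the Ramsey step, and, closely related, the finite-boundedness step. In general, substructures of Ramsey structures need not themselves be Ramsey, so there is no soft argument available; rather, it is precisely the Ramsey property of $\sB$---the very property used to canonise $e$---that must be leveraged a second time, through the canonical data carried by $e$, to propagate Ramseyness down to $\sB'$. The technical challenge will be to orchestrate the lifting--Ramsey--projection step so that post-composition with $e$ behaves coherently on monochromatic copies, and simultaneously to control the age of $\sB'$ precisely enough to preserve finite boundedness when $\sB$ is finitely bounded.
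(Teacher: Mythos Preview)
Your overall architecture is close to the paper's, but there is a genuine gap: canonicity alone is not enough, and the paper's proof hinges on a strictly stronger notion, \emph{range-rigidity}. A function $g$ is range-rigid with respect to $\Aut(\sB)$ if every $\Aut(\sB)$-orbit that meets the range of $g$ is \emph{fixed} (not merely sent to some orbit) by $g$. A canonical $e$ only guarantees that orbits are sent to orbits; an orbit $O$ realised in $e[B]$ may be mapped by $e$ to a different orbit $O'\neq O$. This breaks several of your steps simultaneously. Your homogeneity argument (``extend to $\Aut(\sB)$, then use canonicity to stabilise $B'$'') does not go through, because post-composing with $e$ need not be an embedding on $e[B]$; likewise, in your Ramsey lift--project step, composing a copy of $P$ with $e$ need not yield an isomorphic copy. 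The paper fixes this by first canonising (as you do), then \emph{iterating} the canonical function and passing to a limit to force $g^2[O]\subseteq g[O]$ for every orbit $O$, which is exactly range-rigidity.

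There is a second, independent gap: your claim that ``since $\sA'$ is a core, $e(\sA)$ is still isomorphic to $\sA'$'' is not justified. Canonisation replaces $f$ by a pointwise limit of functions $\beta\circ f\circ\alpha$, and the image of such a limit is not controlled by the image of $f$; there is no reason $e(\sA)$ should itself be a model-complete core. The paper sidesteps this entirely: instead of starting from a retraction onto $\sA'$, it picks $g$ inside a \emph{minimal closed left ideal} of $\End(\sA)$. Minimality gives, for any $h\in\End(\sA)$ and any finite tuple, a left inverse in $\End(\sA)$, and this is what drives the back-and-forth showing that $\sA_g$ is a model-complete core. Finally, the paper does not take $\sB'$ to be the raw induced substructure on $g[B]$ but the Fra\"iss\'e limit of its age (range-rigidity is precisely what gives the amalgamation property for this age), which makes homogeneity automatic and makes the Ramsey and finite-boundedness transfers short and clean.
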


This provides a  positive answer to Question~\ref{quest:mc-core-Ramsey} for model-complete cores. As a by-product, we obtain the same answer for the variant with  model-companions.
Indeed, given any $\omega$-categorical structure $\sA$, let $\sC$ be the expansion of $\sA$ by the complement of each of its relations.
Let $\sC'$ be the model-complete core of $\sC$, and let $\sA'$
be the reduct of $\sC'$ obtained by forgetting the new relations.
Then $\sA'$ is model-complete since it has the same embeddings as $\sC'$. 
Any homomorphism from $\sC$ to $\sC'$ has to be an embedding since
the complement of every relation has to be preserved, so that
there exists an embedding of $\sA$ into $\sA'$, i.e, $\sA'$ is
the model-companion of $\sA$.
Thus, if $\sA$ has a homogeneous Ramsey expansion (with finite signature), so does $\sC$,
which means by Theorem~\ref{thm:main} that $\sC'$ has a homogeneous
Ramsey expansion, and finally this implies that $\sA'$ has
a homogeneous Ramsey expansion.

\new{In the course of this research we discovered that Theorems~3.15 and~3.18 of~\cite{BodirskyRamsey}  mentioned before Question~\ref{quest:mc-core-Ramsey} are incorrect, as shown by the following example. 
Let $E,N, S, <$ be binary relation symbols, and let $P$ be a constant symbol.  Let 
$\sC$ be the Fra\"{i}ss\'{e} limit of the class of finite structures in this signature for which $S$ is the complement of an equivalence relation with two classes, 
$E$ is an undirected graph relation without loops contained in the complement of $S$ and such that the element named by $P$ is not $E$-related to any other element, $N$ is the complement of $E$,   and $<$ is a linear order. Then $\sC$ is homogeneous,  and easily seen to be Ramsey (the square of the automorphism group 
of the random graph with a generic linear order has a continuous homomorphism onto the automorphism group of $\sC$, hence the latter is extremely amenable). Let $\sA$ be the  reduct of $\sC$ with the relations $<$, $E$, $N$, and $S$. Since their automorphism groups are equal, $\sA$ and $\sC$ are first-order reducts of one another. But the model-complete core as well as the model-companion of $\sA$ are isomorphic to the structure which we would have obtained constructing $\sC$ without the special element denoted by $P$; they are homogeneous, but not Ramsey (the latter is a consequence of the fact that their automorphism groups act  continuously on the classes of the complement of $S$ without a fixed point, and hence are not extremely amenable).}

\new{
It follows directly from our theorem by setting $\sA=\sB$ that if $\sA$ is itself homogeneous and Ramsey, then so is its model-complete core (this was   already observed  in~\cite{Bod-New-Ramsey-classes}). One can push this further and use our theorem to derive that if $\sA$ is an $\omega$-categorical structure which has a homogeneous Ramsey expansion by relations with an existential positive definition in $\sA$, then its model-complete core has the same property; this is basically a consequence of the fact that  homomorphisms  preserve existential positive definitions. The same is true for the model-companion and existential definitions. 
These observations  can arguably be viewed as the best possible correction of the aforementioned results from~\cite{BodirskyRamsey}.
}

Since Theorem~\ref{thm:main} is also compatible with the additional condition of finite boundedness, it implies that we obtain a positive answer to Question~\ref{quest:main} provided Question~\ref{quest:Ramsey} has a positive answer: that is, we obtain that the class of first-order reducts of finitely bounded homogeneous Ramsey structures is closed under taking model-complete cores. 

\new{Besides its theoretical interest, Theorem~\ref{thm:main} is also practical in that it gives a concrete way of computing the structures $\sB'$ and $\sA'$. For example,  a corollary of Theorem~\ref{thm:main} is that if $\sA$ is any first-order reduct of the countable universal homogeneous tournament $\mathbb T$ (which is \emph{not} a Ramsey structure itself, but has a homogeneous Ramsey expansion by a linear order), then its model-complete core $\sA'$ is either a one-element structure or is itself a first-order reduct of $\mathbb T$. 
Similar results can be obtained using Theorem~\ref{thm:main} for first-order reducts of countable homogeneous graphs; the (short) proofs of these results can be found in~\cite{SmoothApproximations}. 
Finally, a similar proof shows that if $\sA$ is a any first-order reduct of the countable universal homogeneous partial order $\mathbb P$ and $\sA'$ is its model-complete core, then $\sA'$ is either a one-element structure, a first-order reduct of the order of the rationals, or a first-order reduct of $\mathbb P$. Note that in the second case, if one takes the structure $\sB$  to be the universal homogeneous partial order with a linear expansion in order to apply Theorem~\ref{thm:main}  ($\mathbb P$ does not work since it is not Ramsey), then $\sB'$ is not isomorphic to $\sB$; this case occurs, in particular, if $\sA$ is chosen to be $\mathbb P$ itself.}

It might be interesting to note that our proof does not use Bodirsky's theorem on the existence of model-complete cores. Rather than that, it refines, and perhaps sheds some light on, the second proof of
this result in~\cite{BKOPP-equations}.
On the other hand, it is built upon the Ramsey property, and deriving the  result using our approach seems rather unpractical.



\section{Preliminaries}
We use blackboard boldface letters such as $\sA, \sB, \ldots$ for relational structures, and the same letters in plain font $A, B,\ldots $ for their domains. Similarly,  we write $\fM, \fN,\ldots$ for  transformation semigroups, in particular for permutation groups, and $M,N,\ldots $ for their domains. All domains of relational structures as well as of transformation semigroups are tacitly assumed to be countable.

Let $\fG$ be a permutation group. Then $\fG$ naturally acts componentwise on $G^n$, for all $n\geq 1$. Any minimal non-empty invariant set under any such action will be called an \emph{orbit} of $\fG$; we will sometimes use the notion \emph{$n$-orbit} when we wish to specify the action. A permutation group $\fG$ is \emph{oligomorphic} if it has finitely many $n$-orbits for every $n\geq 1$.

The automorphism group $\Aut(\sA)$ of any relational structure $\sA$  is a permutation group, and $\sA$ is $\omega$-categorical if $\Aut(\sA)$ is oligomorphic. The group $\Aut(\sA)$ is a closed subgroup of the full symmetric group on its domain $A$ with respect to the \emph{topology of pointwise convergence}, i.e., the product topology on $A^A$ where $A$ is taken to be discrete. It is called \emph{extremely amenable} if any continuous action on a compact Hausdorff space has a fixed point.

The endomorphism monoid $\End(\sA)$ of a relational structure $\sA$ is a transformation monoid on its domain $A$. This monoid also acts naturally componentwise on finite powers of $A$, and we shall write $t(a)$ for the $n$-tuple obtained by applying $t\in\End(\sA)$ to a tuple $a\in A^n$, for any $n\geq 1$. The monoid $\End(\sA)$ also bears the topology of pointwise convergence, and $\sA$ is called a \emph{model-complete core} if $\Aut(\sA)$ is dense in $\End(\sA)$ with respect to this topology. If $\fM$ is a subset of $M^M$ (e.g., a  transformation semigroup, or even a permutation group on $M$), then we write $\overline{\fM}$ for the closure of $\fM$ in $M^M$ (and not in the symmetric group on $M$, even if $\fM$ is a permutation group!) with respect to the topology of pointwise convergence. Thus, a function $f\colon M\to M$ is an element of $\overline{\fM}$ if for every finite $F\subseteq M$ there exists $g\in \fM$ such that $g$ agrees with $f$ on $F$. A transformation semigroup $\fM$ is \emph{closed} if $\fM=\overline{\fM}$.

If $\fM,\fN$ are transformation semigroups on the same domain, then $\fM$ is \emph{left-invariant (right-invariant)} under $\fN$ if $n\circ m\in\fM$ ($m\circ n\in\fM$) for all $n\in \fN$ and all $m\in\fM$. The semigroup $\fM$ is \emph{invariant} under $\fN$ if it is left- and right-invariant. A \emph{left-ideal} of $\fM$ is a  subsemigroup which is left-invariant under $\fM$.

Two structures $\sA, \sA'$ are \emph{homomorphically equivalent} if there exists a homomorphism from $\sA$ to $\sA'$ and vice-versa. Whenever $\sA$ is a relational structure, and $\sA', \sA''$ are model-complete cores which are homomorphically equivalent  to $\sA$, then $\sA', \sA''$ are isomorphic. We can thus speak of the model-complete core of a structure $\sA$ if it exists.

A structure $\sA$ is \emph{homogeneous} if every  isomorphism between finite induced substructures of $\sA$ extends to an automorphism of $\sA$.
If $\sA$ is homogeneous, then its \emph{age}, i.e.,
the class of its finite induced substructures up to isomorphism, has the \emph{amalgamation property (AP)}: a class $\cK$ of structures has the AP if for all $\sA_0,\sA_1,\sA_2\in\cK$ and all embeddings $e_1,e_2$ of $\sA_0$ into $\sA_1,\sA_2$, respectively, there exist embeddings $f_1,f_2$ of $\sA_1,\sA_2$ into a structure $\sC\in\cK$ such that $f_1\circ e_1=f_2\circ e_2$.
Conversely, if the age $\cK$ of some countable structure $\sA'$ has the AP,
then there exists a countable homogeneous structure $\sA$ whose age is equal to $\cK$. The structure $\sA$ is called the \emph{Fra\"iss\'e limit of $\cK$}.


A class $\cK$ of finite structures in a common finite signature is called \emph{finitely bounded} if there exists a finite set $\cF$ of structures in that signature such that membership in $\cK$ is equivalent to not embedding any member of $\cF$. An infinite  structure is called finitely bounded if its age is finitely bounded.

Given two structures $\sS,\sF$, an \emph{isomorphic copy of\/ $\sS$ in $\sF$}
is an embedding from $\sS$ to $\sF$.
A structure $\sA$ is \emph{Ramsey} if for all structures $\sS,\sF$ in its age and all colorings of the isomorphic copies of $\sS$ in $\sA$ with two colors there exists an isomorphic copy of $\sF$ in $\sA$ on which the coloring is constant. If $\sA$ is homogeneous, then this is the case if and only if $\Aut(\sA)$ is extremely amenable~\cite{Topo-Dynamics}.


\section{The Proof}

\subsection{Range-rigidity}

\begin{definition}\label{def:rangerigid}
Let $\fG$ be a permutation group, and let $g\colon G\to G$ be a function. We call $g$ \emph{range-rigid} with respect to $\fG$ if for all $\beta\in\fG$ we have  $g\in\overline{ \{\alpha\circ g\circ \beta\circ g\;|\; \alpha\in\fG\}}$; in other words, every orbit of $\fG$ which has a tuple within the range of $g$ is invariant under $g$. 
\end{definition}

\begin{lemma}\label{lem:AP}
Let $\sB$ be a homogeneous structure, and let $g\colon B\to B$ be range-rigid with respect to $\Aut(\sB)$. Then the age of the structure induced by the range of $g$ in $\sB$  has the AP.
\end{lemma}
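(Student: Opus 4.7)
My approach is to amalgamate in the ambient structure $\sB$, using that its age has the amalgamation property by homogeneity, and then push the amalgam into the substructure of $\sB$ induced on $R := g(B)$ by applying $g$. The central tool is the second, equivalent formulation of range-rigidity stated in Definition~\ref{def:rangerigid}: whenever an $\Aut(\sB)$-orbit $O$ on $n$-tuples contains a tuple in $R^n$, it is closed under the componentwise action of $g$. I would first record this reformulation explicitly: given $\bar a^{\ast}\in O\cap R^n$, write $\bar a^{\ast}=g(\bar c)$ for some $\bar c\in B^n$; for any $\bar a=\gamma(\bar a^{\ast})\in O$ with $\gamma\in\Aut(\sB)$, apply the closure condition in the definition with $\beta=\gamma$ and $F\supseteq\{c_1,\dots,c_n\}$ to obtain some $\alpha\in\Aut(\sB)$ with $g(\bar c)=\alpha(g(\gamma(g(\bar c))))$; that is, $\bar a^{\ast}=\alpha(g(\bar a))$, whence $g(\bar a)=\alpha^{-1}(\bar a^{\ast})\in O$.

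Now let $\sA_0,\sA_1,\sA_2$ be finite structures in the age of the substructure on $R$, together with embeddings $e_i\colon \sA_0\to \sA_i$. Since these $\sA_i$ lie in the age of $\sB$, which has AP by homogeneity, and $\sB$ is universal for its age as a countable homogeneous structure, there exist a finite substructure $\sC\subseteq\sB$ and embeddings $f_i\colon \sA_i\to \sC$ with $f_1\circ e_1=f_2\circ e_2$. Let $\sC'$ denote the substructure of $\sB$ induced on $g(\sC)\subseteq R$; it is finite, belongs to the age of the substructure on $R$, and inherits the commutation $g\circ f_1\circ e_1=g\circ f_2\circ e_2$.

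It remains to check that each $g\circ f_i\colon \sA_i\to \sC'$ is an embedding. For any tuple $\bar a$ from $\sA_i$, the tuple $f_i(\bar a)\in B^n$ shares its quantifier-free type with $\bar a$ because $f_i$ is an embedding; since $\sA_i$ embeds into the substructure on $R$, this type is realized by a tuple in $R^n$, so the $\Aut(\sB)$-orbit of $f_i(\bar a)$ meets $R^n$. The reformulation of range-rigidity then yields that $g(f_i(\bar a))$ lies in the same orbit, hence has the same quantifier-free type by homogeneity of $\sB$. Consequently $g\circ f_i$ preserves and reflects every atomic and negated atomic formula, including the type $x_1\neq x_2$ of a pair of distinct elements (realized in $R^2$ whenever $|R|\geq 2$; the cases $|R|\leq 1$ being trivial), yielding injectivity and completing the argument. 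I expect the main obstacle to be the reformulation of range-rigidity and checking its applicability to the relevant orbits; once these are in place, the rest of the proof is the natural ``amalgamate in $\sB$, then apply $g$'' construction.
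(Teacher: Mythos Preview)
Your proof is correct and follows essentially the same strategy as the paper's: amalgamate inside $\sB$ and then push the amalgam into the range of $g$, using range-rigidity to guarantee that the resulting maps are embeddings. The only difference is presentational: the paper extends the given embeddings $\sA_0\to\sA_1,\sA_2$ to automorphisms $\alpha,\beta$ of $\sB$ via homogeneity and builds the amalgam explicitly as $g[\alpha^{-1}[g[V]]\cup\beta^{-1}[g[W]]]$, whereas you first invoke the AP of the age of $\sB$ abstractly and then apply $g$; your explicit unpacking of the orbit reformulation of range-rigidity is exactly what the paper leaves implicit when it says the embeddings follow ``from the range-rigidity of $g$''.
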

\begin{proof}
Let $U,V,W\subseteq B$ be so that $g[U], g[V], g[W]$ are finite, and such that the structure $\sS_U$ induced by $g[U]$ embeds into the structures $\sS_V$ and $\sS_W$ induced by $g[V]$ and $g[W]$, respectively. By the homogeneity of $\sB$, we know that these embeddings can be performed by restricting automorphisms $\alpha,\beta\in\Aut(\sB)$ to $g[U]$. Consider the structure $\sF$ induced by $F:=g[\alpha^{-1}[g[V]]\cup \beta^{-1}[g[W]]$ in $\sB$. Then $\sF$ is an amalgam of $\sS_V$ and $\sS_W$ over $\sS_U$ and embeddings given by $\alpha,\beta$: the witnessing embeddings of the amalgamation are the restrictions of $g\circ \alpha^{-1}$ and $g\circ \beta^{-1}$ to $g[V]$ and $g[W]$, respectively:
the fact that these restrictions are embeddings follows from the  range-rigidity of $g$.
\end{proof}

\begin{definition}\label{def:limit}
Let $\sB$ be a  homogeneous structure, and let $g\colon B\to B$ be range-rigid with respect to $\Aut(\sB)$.
\begin{itemize}
    \item We denote by $\sB_g$ the Fra\"{i}ss\'{e} limit of the class of finite structures induced by the range of $g$ (which has the AP, by~Lemma~\ref{lem:AP}). By the  homogeneity of $\sB$, we may assume that  $\sB_g$ is an induced substructure of $\sB$.
    \item If $\sA$ is first-order definable in $\sB$ without quantifiers,  then we denote by $\sA_g$ the substructure induced by the domain of $\sB_g$ in $\sA$.
\end{itemize}
\end{definition}

We remark that in Definition~\ref{def:limit}, if $\sB$ is  $\omega$-categorical, then any first-order reduct $\sA$ has a quantifier-free first-order definition in $\sB$. Hence, $\sA_g$ is \new{well-defined} in that case. 

In the following three lemmas, we show that $\sB_g$ retains the properties of $\sB$ that we are interested in.
\begin{lemma}\label{lem:omegacat}
    Let $\sB$ be a homogeneous structure,
    and let $g\colon B\to B$ be range-rigid with respect to     $\Aut(\sB)$. If\/ $\sB$ is $\omega$-categorical, then so is $\sB_g$.
\end{lemma}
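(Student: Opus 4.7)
The plan is to apply the Ryll-Nardzewski theorem: since $\sB_g$ is countable, it suffices to show that its automorphism group is oligomorphic, and by the homogeneity of $\sB_g$ (which holds by construction, as $\sB_g$ is a Fra\"iss\'e limit) this reduces to showing that for every $n\geq 1$ there are only finitely many isomorphism types of $n$-element structures in the age of $\sB_g$.

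First I would observe that, by Definition~\ref{def:limit}, $\sB_g$ is realized as an induced substructure of $\sB$. Consequently, the age of $\sB_g$ is contained in the age of $\sB$: every finite induced substructure of $\sB_g$ is also a finite induced substructure of $\sB$.

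Next I would use $\omega$-categoricity of $\sB$. By the Ryll-Nardzewski theorem applied to $\sB$, the permutation group $\Aut(\sB)$ is oligomorphic. Since $\sB$ is homogeneous, two $n$-tuples in $\sB$ lie in the same orbit of $\Aut(\sB)$ if and only if the map between them is a partial isomorphism; in particular, there are only finitely many isomorphism types of $n$-element induced substructures of $\sB$. Combining this with the previous paragraph, the age of $\sB_g$ contains only finitely many isomorphism types of $n$-element structures for each $n$.

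Finally, since $\sB_g$ is homogeneous, the number of orbits of $\Aut(\sB_g)$ on $B_g^n$ is bounded above by the number of isomorphism types of structures of size at most $n$ in the age of $\sB_g$, which is finite by the previous step. Hence $\Aut(\sB_g)$ is oligomorphic, and applying Ryll-Nardzewski once more concludes that $\sB_g$ is $\omega$-categorical. There is no real obstacle here: range-rigidity was used only to establish the AP and hence the existence of $\sB_g$ in Definition~\ref{def:limit}; once $\sB_g$ sits inside $\sB$ as an induced substructure, $\omega$-categoricity transfers immediately through the age.
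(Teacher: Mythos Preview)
Your proof is correct and follows essentially the same approach as the paper's: both use the homogeneity of $\sB_g$ together with the fact that $\sB_g$ is an induced substructure of $\sB$ to bound the number of $n$-types in $\sB_g$ by a quantity that is finite by the $\omega$-categoricity of $\sB$. The only cosmetic difference is that the paper phrases the finiteness in terms of atomic formulas with $n$ free variables up to equivalence, whereas you phrase it via isomorphism types of structures of size at most $n$ in the age; the content is identical.
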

\begin{proof}
    Since $\sB_g$ is homogeneous, it suffices to prove that for every $n\geq1$, there are only finitely many atomic formulas with $n$ free variables modulo equivalence.
    Since $\sB_g$ is a substructure of $\sB$, if two atomic formulas
    are equivalent in $\sB$ then they are equivalent in $\sB_g$.
    Since $\sB$ is homogeneous and $\omega$-categorical, for every $n\geq1$ there are only finitely many atomic formulas with $n$ free variables in $\sB$, and we get the desired result.
\end{proof}

\begin{lemma}\label{lem:finitelybounded}
Let $\sB$ be a homogeneous structure, and let $g\colon B\to B$ be range-rigid with respect to $\Aut(\sB)$. If\/ $\sB$ is finitely bounded, then so is $\sB_g$.
\end{lemma}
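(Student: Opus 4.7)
The plan is to produce an explicit finite family of forbidden substructures for $\sB_g$. Since $\sB$ is finitely bounded, its signature $\tau$ is finite; let $r \ge 2$ be an upper bound on the arity of relations in $\tau$, and let $\cF$ denote the given finite family of bounds for $\sB$.

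The central step will be the following claim: for every finite $\tau$-structure $\sS$, $\sS$ lies in the age of $\sB_g$ if and only if every induced substructure of $\sS$ of size at most $r$ does. To prove this, I will encode embeddability into $\sB_g$ in terms of orbits: let $\mathcal{O}^g$ denote the set of orbits of $\Aut(\sB)$ on tuples (of any length, acting componentwise) that contain a tuple in the range of $g$. Range-rigidity gives immediately that $O \in \mathcal{O}^g$ if and only if $g$ maps $O$ into itself. By homogeneity of $\sB$, a finite structure $\sS$ embeds into the substructure induced by the range of $g$---equivalently into $\sB_g$---if and only if some (equivalently every, since $\mathcal{O}^g$ is closed under permutations of coordinates) enumeration of $\sS$ has orbit in $\mathcal{O}^g$. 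Since in a homogeneous structure with relational signature of arity at most $r$ the orbit of an $n$-tuple is determined by the orbits of its sub-$r$-tuples, $g$ preserves an orbit $O$ if and only if $g$ preserves each of its projections to at most $r$ coordinates; combined with the range-rigidity equivalence, this translates back to the claim.

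Finally, let $\cF'$ be the set of $\tau$-structures of size at most $r$ which lie in the age of $\sB$ but not in the age of $\sB_g$. Since $\tau$ is finite, $\cF'$ is finite. The claim together with the finite boundedness of $\sB$ yields that a finite $\tau$-structure $\sS$ lies in the age of $\sB_g$ if and only if no member of $\cF \cup \cF'$ embeds into $\sS$, so $\sB_g$ is finitely bounded. I anticipate that the main technical point to verify is that $g$-invariance of an $n$-orbit reduces to $g$-invariance of its $r$-subprojections, which uses both homogeneity of $\sB$ and the bounded arity of its signature in essential ways; this also means the argument has to be careful about equality-types when dealing with possibly non-injective tuples, which is why it is convenient to take $r \geq 2$.
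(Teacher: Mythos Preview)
Your approach is essentially the paper's: augment $\cF$ by the finitely many structures of size at most the maximal arity that lie in the age of $\sB$ but not of $\sB_g$, and use range-rigidity to see that $g$ restricts to an embedding on any substructure of $\sB$ whose small pieces already lie in the age of $\sB_g$.

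One correction is needed, however. Your ``central claim'' is false for arbitrary $\tau$-structures: with $\sB=(\mathbb Q,<)$, $g=\mathrm{id}$, and $\sS$ a directed $3$-cycle, every $2$-element induced substructure lies in the age of $\sB_g=\sB$ but $\sS$ does not. Your orbit argument tacitly assumes that $\sS$ already embeds into $\sB$ (otherwise ``the orbit of an enumeration of $\sS$'' is undefined), and under that hypothesis the claim is correct. Since in the last step you combine the claim with $\cF$, which is exactly what forces $\sS$ into the age of $\sB$, your final conclusion still stands; just restate the claim for $\sS$ in the age of $\sB$.
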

\begin{proof}
Let $\cF$ be a finite set of forbidden substructures for the age of $\sB$. Let $m\geq 1$ be the maximum of the arities of the relations of $\sB$. Let $\cF'$ consist of all structures in $\cF$, plus all structures on the set $\{1,\ldots,m\}$ which are isomorphic to a substructure of $\sB$ but not of $\sB_g$. 

Clearly, if a finite structure in the signature of $\sB$ embeds a member of $\cF'$, then it cannot be in the age of $\sB_g$. Conversely, if such a structure $\sF$ does not embed  any member of $\cF'$, then it embeds into $\sB$, hence we may assume it is a substructure of $\sB$. Applying $g$ to this substructure, we obtain a structure isomorphic to $\sF$, because $g$ preserves all $m$-orbits which intersect its range, and all $m$-element substructures of $\sF$ belong to such orbits. Hence, $g$ shows that $\sF$ embeds into $\sB_g$.
\end{proof}

\begin{lemma}\label{lem:Ramsey}
Let $\sB$ be a homogeneous structure, and let $g\colon B\to B$ be range-rigid with respect to $\Aut(\sB)$. If\/ $\sB$ is Ramsey, then so is $\sB_g$.
\end{lemma}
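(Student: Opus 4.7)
The plan is to deduce the Ramsey property of $\sB_g$ from that of $\sB$ by transferring colorings through $g$. Given $\sS,\sF$ in the age of $\sB_g$ and a $2$-coloring $\chi$ of the copies of $\sS$ in $\sB_g$, I shall lift $\chi$ to a coloring $\chi'$ of the copies of $\sS$ in $\sB$, apply the Ramsey property of $\sB$ to obtain a copy of $\sF$ in $\sB$ monochromatic for $\chi'$, and then push this copy into $\sB_g$ using $g$ together with a fixed auxiliary embedding.

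The key observation that drives the transfer is the following: for every $\sT\in\mathrm{age}(\sB_g)$ and every copy $C$ of $\sT$ sitting inside $\sB$, the restriction of $g$ to $C$ is an isomorphism onto $g[C]$ (which is then itself a copy of $\sT$). Indeed, by Definition~\ref{def:limit} the age of $\sB_g$ coincides with the age of the substructure of $\sB$ induced on $\mathrm{range}(g)$, so the $\Aut(\sB)$-orbit of any enumeration of $C$ already meets $\mathrm{range}(g)^n$; range-rigidity of $g$ then forces this orbit to be invariant under the componentwise action of $g$. With this in hand, I invoke the fact that $\sB_g$, being a Fra\"{i}ss\'{e} limit, is universal for countable structures whose age is contained in $\mathrm{age}(\sB_g)$, and fix once and for all an embedding $\iota$ of the substructure of $\sB$ induced on $\mathrm{range}(g)$ into $\sB_g$.

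Now I set $\chi'(C):=\chi(\iota(g[C]))$ for each copy $C$ of $\sS$ in $\sB$; this is well-defined by the observation applied to $\sT=\sS$. The Ramsey property of $\sB$ then supplies a copy $F$ of $\sF$ in $\sB$ on which $\chi'$ is constant. Applying the observation to $\sT=\sF$, the set $\iota(g[F])$ is a copy of $\sF$ in $\sB_g$, and since $\iota\circ g$ is injective on $F$, every copy of $\sS$ inside $\iota(g[F])$ has the form $\iota(g[C])$ for a unique copy $C$ of $\sS$ in $F$. The identity $\chi(\iota(g[C]))=\chi'(C)$ thus shows that $\iota(g[F])$ is monochromatic for $\chi$, which is what we need. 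The one subtle point is the existence of a single global $\iota$ that works uniformly for all $C$, and this is precisely the universality of the Fra\"{i}ss\'{e} limit $\sB_g$; no further property of $\sB$ such as $\omega$-categoricity or finite boundedness is needed.
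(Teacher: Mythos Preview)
Your proof is correct and follows essentially the same route as the paper's: the paper likewise fixes an embedding $f\colon g[B]\to B_g$ (your $\iota$), pulls back $\chi$ to a coloring on copies of $\sS$ in $\sB$ via $f\circ g$, applies the Ramsey property of $\sB$, and pushes the monochromatic copy of $\sF$ forward through $f\circ g$. The only cosmetic difference is that the paper treats copies as embeddings rather than as subsets, so the transfer is phrased as composition with $f\circ g$ rather than as taking images; your justification for the key observation (that $g$ acts as an isomorphism on any copy in $\sB$ of a structure from $\mathrm{age}(\sB_g)$) is exactly the range-rigidity argument the paper invokes.
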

\begin{proof}
Let $\sS,\sF$ be finite induced substructures of $\sB_g$, and let $\chi$ be a coloring of the isomorphic copies of $\sS$ in $\sB_g$ with two colors. Let $f\colon g[B]\to B_g$ be a function which is an embedding with respect to the relations of $\sB$; such an embedding exists by the homogeneity of $\sB_g$ and since the age of the structure induced by $g[B]$ in $\sB$ is equal to the age of $\sB_g$. The coloring $\chi$ then induces a coloring $\chi'$ of the isomorphic copies of $\sS$ in $\sB$: any such copy is by composing with  $f\circ g$ sent  to an isomorphic copy of $\sS$ in $\sB_g$, by the range-rigidity of $g$; this second copy is assigned a color by $\chi$.  Since $\sB$ is Ramsey, there exists an isomorphic copy of $\sF$ in $\sB$ on which $\chi'$ is constant. Composing this copy with  $f\circ g$ then yields, again by the range-rigidity of $g$, a copy of $\sF$ in $\sB_g$. Clearly, $\chi$ is constant on this latter copy.
\end{proof}

\begin{lemma}\label{lem:core}
Let $\sB$ be a  homogeneous structure, and let $\sA$ be first-order definable in $\sB$ without quantifiers. Suppose that $\fN\subseteq \End(\sA)$ is a minimal closed left ideal, and that  $g\in \fN$ is  range-rigid with respect to $\Aut(\sB)$. Then $\sA_g$ is the   model-complete core of $\sA$.
\end{lemma}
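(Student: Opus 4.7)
The plan is to verify two things separately: $\sA$ and $\sA_g$ are homomorphically equivalent, and $\sA_g$ is itself a model-complete core. By uniqueness of model-complete cores, these two facts identify $\sA_g$ as the model-complete core of $\sA$.

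Homomorphic equivalence is the easier half. The inclusion $\iota\colon A_g\hookrightarrow A$ gives an embedding $\sA_g\to\sA$. Conversely, Lemma~\ref{lem:AP} says the $\sB$-substructure induced by $g[A]$ has the same age as $\sB_g$, so homogeneity of $\sB_g$ provides an embedding $f\colon g[A]\to A_g$ of $\sB$-structures, which is also an $\sA$-embedding since $\sA$ is quantifier-free definable in $\sB$. Then $f\circ g\colon\sA\to\sA_g$ is a homomorphism.

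For the model-complete core property I must show that $\Aut(\sA_g)$ is dense in $\End(\sA_g)$. Given $h\in\End(\sA_g)$ and a finite $F\subseteq A_g$, I extend $h$ to $\tilde h:=\iota\circ h\circ f\circ g\in\End(\sA)$. Since $\fN$ is a left ideal and $g\in\fN$, we have $\tilde h\circ g\in\fN$; minimality of $\fN$ then gives $g\in\overline{\fN\circ(\tilde h\circ g)}$, so for any finite $F'\subseteq A$ there is $s\in\fN$ with $(s\circ\tilde h\circ g)|_{F'}=g|_{F'}$. For a tuple $\tuple{a}\in(F')^n$ this becomes $s(h(\tuple{c}))=g(\tuple{a})$ with $\tuple{c}:=f(g(g(\tuple{a})))\in A_g^n$. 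Range-rigidity of $g$ applied to $g(\tuple{a})\in g[A]$, together with $f$ being a $\sB$-embedding, yields $\tuple{c}\sim_\sB g(\tuple{a})$.

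The decisive step is then to choose $s$ itself to be range-rigid, for instance an idempotent of $\fN$, whose image covers every $\Aut(\sB)$-orbit realized in the age of $g[A]$; this makes $s(h(\tuple{c}))\sim_\sB h(\tuple{c})$, which combined with the previous line gives $h(\tuple{c})\sim_\sB\tuple{c}$. By homogeneity of $\sB_g$ there is an element of $\Aut(\sB_g)\subseteq\Aut(\sA_g)$ sending $\tuple{c}$ to $h(\tuple{c})$. A conjugation argument using that $\tuple{a}\mapsto f(g(g(\tuple{a})))$ realizes every $\Aut(\sB_g)$-orbit on $A_g^n$ then transfers the conclusion to arbitrary tuples in $A_g^n$, producing the required $\alpha\in\Aut(\sA_g)$ with $\alpha|_F=h|_F$. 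The main obstacle I expect is justifying that a range-rigid $s$ can indeed be chosen subject to the constraint $s\circ\tilde h\circ g|_{F'}=g|_{F'}$, which I would address by examining the interaction between idempotents in $\fN$ and the range-rigid element $g$.
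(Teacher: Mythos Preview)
Your treatment of homomorphic equivalence is correct and essentially identical to the paper's.

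The genuine gap is exactly the ``decisive step'' you flagged yourself. Minimality of $\fN$ gives you, for each finite $F'$, \emph{some} $s\in\fN$ with $(s\circ\tilde h\circ g)|_{F'}=g|_{F'}$; it gives you no control over which $s$ you get. There is no mechanism in the hypotheses that lets you simultaneously impose the pointwise constraint \emph{and} force $s$ to be range-rigid (or an idempotent whose range meets the orbit of $h(\tuple c)$). Your proposed fix via idempotents does not help: an idempotent of $\fN$ is a specific element, not one you can tune to satisfy an extra equality on $F'$. Even granting a range-rigid $s$, you would still need $h(\tuple c)$ to lie in an orbit that meets the range of $s$, which is a further unjustified assumption. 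So the implication $s(h(\tuple c))\sim_\sB h(\tuple c)$ is not established, and the argument stalls before the conjugation step.

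The paper avoids this entirely by changing the target: instead of producing an automorphism agreeing with $h$ on $F$, it shows that every $e\in\End(\sA_g)$ is \emph{locally left-invertible in $\End(\sA_g)$}, i.e.\ for every finite tuple $a$ in $A_g$ there exists $e'\in\End(\sA_g)$ with $e'(e(a))=a$. This makes the set of pairs $(a,e(a))$ a back-and-forth system, whence $\Aut(\sA_g)$ is dense in $\End(\sA_g)$. The point is that $e'$ is obtained as follows. Choose $f$ so that $a$ lies in the range of $f\circ g^2$ (possible since range-rigidity makes the age of $g^2[B]$ equal to that of $\sB_g$), say $a=f(g^2(b))$. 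Minimality yields an \emph{arbitrary} $h\in\End(\sA)$ with $h\big((e\circ f\circ g)(g(b))\big)=g(b)$; no further property of $h$ is needed. Now simply post-compose with $f\circ g$: the map $e':=(f\circ g\circ h)|_{A_g}$ lands automatically in $\End(\sA_g)$ because $f\circ g$ sends all of $A$ into $A_g$, and one checks directly that $e'(e(a))=a$. The trick that you are missing is that $f\circ g$ already projects everything back into $A_g$, so you never have to ask the element coming from minimality to be range-rigid.
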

\begin{proof}
We first prove that the set 
\[\mathcal I:=\{(a,b) \mid \exists n\geq 1\;  \exists e\in\End(\sA_g)\;\; (a,b\in (A_g)^n \wedge   e(a)=b) \} \]
is a back-and-forth system of partial isomorphisms of $\sA_g$.
This implies that $\sA_g$ is a model-complete core,
since we then obtain for every $e\in\End(\sA_g)$ and every finite tuple $a$
an automorphism $\alpha\in\Aut(\sA_g)$ such that $\alpha(a)=e(a)$. To prove that $\mathcal I$ consists of partial isomorphisms of $\sA_g$
and has the back-and-forth property, note that it suffices to show the following: for every $e\in\End(\sA_g)$ and every finite tuple $a$ of elements in $\sA_g$, there exists $e'\in\End(\sA_g)$ such that $e'(e(a))=a$.

To this end, let $f\colon g[B]\to B_g$ be a function which is an embedding with respect to the relations of $\sB$,
such an embedding exists since the age of the structure induced by $g[B]$ in $\sB$ is the same as the age of $\sB_g$.
\new{Moreover, one can assume that $a$ lies within the range of $f\circ g^2$.
Indeed, also the age of the structure induced by $g^2[B]$ in $\sB$ is the same as that of $\sB_g$. Thus, there is a tuple $b$ in $g^2[B]$ which induces the same substructure as $a$.
Since $f$ is an embedding with respect to the relations of $\sB$, the structures induced by $f(b)$ and $a$ in $\sB_g$ are isomorphic, so that by the homogeneity of $\sB_g$ there is an automorphism $\alpha\in\Aut(\sB_g)$ such that $\alpha(f(b))=a$.
Replacing $f$ by $\alpha\circ f$, we can therefore assume without loss of generality that $a$ lies in the range of $f\circ g^2$.}
Let $b$ be a tuple such that $a=(f\circ g^2)(b)$. By the fact that $g$ lies in a minimal closed left ideal of $\End(\sA)$, there exists $h\in\End(\sA)$ such that $$
h\circ (e\circ f\circ g)(g(b))=g(b)\; .
$$ 
Hence,
$$
a=(f\circ g^2)(b)=(f\circ g)(g(b))=(f\circ g)\circ h\circ (e\circ f\circ g)(g(b))=(f\circ g\circ h)\circ e(a)\; .
$$
The restriction of the function $f\circ g\circ h\in\End(\sA)$ to $A_g$ thus bears witness to the statement we wanted to prove.

Since $\sA_g$ is a substructure of $\sA$, and since $f\circ g$ is a homomorphism from $\sA$ to $\sA_g$, the structures $\sA_g$ and $\sA$ are homomorphically equivalent. Whence, $\sA_g$ is indeed the model-complete core of $\sA$.
\end{proof}

\subsection{From extreme amenability to canonicity to range-rigidity}
The following definition from~\cite{BPT-decidability-of-definability} is incomparable to range-rigidity, but will allow us to produce range-rigid functions.
\begin{definition}\label{def:canonical}
Let $\fG$ be a permutation group, and let $g\colon G\to G$ be a function. We call $g$  \emph{canonical} with respect to $\fG$ if for all $\beta\in\fG$ we have $g\in\overline{ \{\alpha\circ g\circ \beta\;|\; \alpha\in\fG\}}$; in other words, the image of any orbit of $\fG$ under $g$ is contained in an orbit.
\end{definition}

\begin{lemma}[\new{The canonisation lemma (Lemma 14 in~\cite{BPT-decidability-of-definability}, Theorem 5 in~\cite{BodPin-CanonicalFunctions})}]\label{lem:canonisation}
Let $\fG$ be a closed oligomorphic extremely amenable permutation group, and let $g\colon B\to B$. Then 
$$
\overline{\{\alpha\circ g\circ \beta \;|\; \alpha,\beta\in\fG\}}
$$
contains a canonical function with respect to $\fG$.
\end{lemma}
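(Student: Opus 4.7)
The plan is to invoke the standard extreme-amenability-on-a-space-of-types argument. Let $\mathcal{O}_n$ denote the finite set of $n$-orbits of $\fG$ (finite by oligomorphicity), and consider the compact Hausdorff space
\[
T := \prod_{n\geq 1}\mathcal{O}_n^{B^n},
\]
a product of finite discrete spaces, compact by Tychonoff. To each $f\colon B\to B$ I would associate its \emph{type} $\tau_f\in T$, where $\tau_f(a)$ is the $\fG$-orbit of the tuple $f(a)$ for every tuple $a$ over $B$. The map $f\mapsto \tau_f$ is continuous, invariant under left composition by elements of $\fG$, and intertwines right composition with the natural right action of $\fG$ on $T$ given by $(\beta\cdot\tau)(a):=\tau(\beta^{-1}(a))$, in the sense that $\tau_{f\circ\beta^{-1}}=\beta\cdot\tau_f$.

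First I would set $\Sigma:=\overline{\{\tau_{\alpha\circ g\circ \beta}\mid \alpha,\beta\in\fG\}}\subseteq T$, which is a compact subspace invariant under the above $\fG$-action. Since $\fG$ is a closed subgroup of the symmetric group on $B$, it is a topological group, and its action on $\Sigma$ is continuous. By extreme amenability this action has a fixed point $\tau^*\in\Sigma$, and the fixed-point identity $\tau^*(\beta^{-1}(a))=\tau^*(a)$ for every $\beta\in\fG$ is precisely the statement that $\tau^*(a)$ depends only on the $\fG$-orbit of $a$.

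The main obstacle, I expect, will be the second step: realising $\tau^*$ as the type of an actual function $h\in\overline{\{\alpha\circ g\circ \beta\mid \alpha,\beta\in\fG\}}$, because $B^B$ is not compact and so one cannot directly pass from the finite approximations provided by $\tau^*\in\Sigma$ to a limit. My plan is to enumerate $B=\{b_1,b_2,\dots\}$ and to define the values $c_n:=h(b_n)$ inductively. At stage $n$, the fact that $\tau^*\in\Sigma$ provides $\alpha_n,\beta_n\in\fG$ such that $\tau_{\alpha_n\circ g\circ\beta_n}$ agrees with $\tau^*$ on every tuple over $F_n:=\{b_1,\dots,b_n\}$; the partial image $((\alpha_n\circ g\circ\beta_n)(b_i))_{i<n}$ then lies in the same $\fG$-orbit as $(c_1,\dots,c_{n-1})$, so pre-composing $\alpha_n$ on the left by a suitable element of $\fG$ (which leaves the type invariant) makes the resulting function extend the previously-chosen values, and one sets $c_n$ to be its value at $b_n$. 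The limit function $h$ satisfies $\tau_h=\tau^*$, and since $h|_{F_n}$ coincides with a function in $\{\alpha\circ g\circ \beta\mid \alpha,\beta\in\fG\}$ for every $n$, $h$ lies in the required closure.

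Finally, to verify canonicity I would fix $\beta\in\fG$ and an arbitrary finite tuple $a$: the $\fG$-invariance of $\tau^*$ gives $\tau_h(\beta(a))=\tau^*(\beta(a))=\tau^*(a)=\tau_h(a)$, so $h(\beta(a))$ and $h(a)$ lie in a common $\fG$-orbit; hence there exists $\alpha\in\fG$ with $\alpha\circ h\circ\beta$ agreeing with $h$ on the entries of $a$. Since $a$ was arbitrary, this shows $h\in\overline{\{\alpha\circ h\circ\beta\mid\alpha\in\fG\}}$ for every $\beta\in\fG$, which is exactly the definition of $h$ being canonical.
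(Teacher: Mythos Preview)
The paper does not prove this lemma; it is quoted from the cited references (Lemma~14 in \cite{BPT-decidability-of-definability}, Theorem~5 in \cite{BodPin-CanonicalFunctions}), so there is no in-paper proof to compare against. Your argument is correct and is essentially the standard proof appearing in those sources: act by $\fG$ on the compact space of types, use extreme amenability to obtain an invariant type, and then realise that type by an inductive back-and-forth construction inside $\overline{\{\alpha\circ g\circ\beta\mid\alpha,\beta\in\fG\}}$. The one point worth making explicit is why agreement of $\tau_{\alpha_n\circ g\circ\beta_n}$ with $\tau^*$ at the single coordinate $(b_1,\dots,b_n)$ suffices for agreement on \emph{all} tuples over $F_n$; this holds because $\tau^*$, as a limit of types of functions, is consistent under coordinate projections, so the orbit at any tuple over $F_n$ is determined by the orbit at $(b_1,\dots,b_n)$.
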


\begin{lemma}\label{lem:range-rigid}
Let $\fG$ be a closed oligomorphic permutation group, and let $\fM\subseteq G^G$ be a non-empty closed transformation semigroup which is invariant under $\fG$ and which contains a canonical function with respect to $\fG$. Then $\fM$ contains a range-rigid  function with respect to $\fG$.
\end{lemma}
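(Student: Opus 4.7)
The plan is to reformulate range-rigidity for a canonical function as an idempotence property of induced maps on orbits, and then to construct a canonical, orbit-idempotent element of $\fM$ as a pointwise limit of suitably adjusted powers of a given canonical $g\in\fM$. For any canonical $f$ and any $k\geq 1$, let $\pi_k^f$ denote the induced self-map on the (by oligomorphicity, finite) set of $k$-orbits of $\fG$. A canonical $f$ sends each $k$-orbit $O$ into the single orbit $\pi_k^f(O)$, and the $k$-orbits intersecting the range of $f$ are precisely the orbits in the image of $\pi_k^f$. Hence $f$ is range-rigid if and only if $\pi_k^f$ fixes every orbit in its image, i.e.\ $(\pi_k^f)^2=\pi_k^f$, for every $k\geq 1$.

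Given the canonical $g\in\fM$ provided by hypothesis, for each $k$ the map $\pi_k^g$ lies in a finite transformation monoid and hence has an index $i_k$ and period $p_k$ such that $(\pi_k^g)^N$ equals the same idempotent $e_k$ whenever $N\geq i_k$ and $p_k\mid N$. I then choose a divisibility chain $N_1\mid N_2\mid\cdots$ with $N_k$ divisible by $\mathrm{lcm}(p_1,\dots,p_k)$ and $N_k\geq\max(i_1,\dots,i_k)$, and set $g_k:=g^{N_k}\in\fM$. Each $g_k$ is canonical, and $\pi_j^{g_k}=e_j$ for all $j\leq k$; in particular, $\pi_j^{g_k}$ is constantly equal to $e_j$ for $k\geq j$.

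Now enumerate $G=\{a_1,a_2,\dots\}$ and build $h_k:=\alpha_k\circ g_k$ with $\alpha_k\in\fG$ inductively so that $h_k$ agrees with $h_{k-1}$ on $\{a_1,\dots,a_{k-1}\}$. This is possible because the stabilization $\pi_{k-1}^{g_k}=\pi_{k-1}^{g_{k-1}}$ together with $h_{k-1}=\alpha_{k-1}\circ g_{k-1}$ forces the tuples $g_k(a_1,\dots,a_{k-1})$ and $h_{k-1}(a_1,\dots,a_{k-1})$ to lie in a common $\fG$-orbit, so some $\alpha_k\in\fG$ maps the former to the latter. The invariance of $\fM$ under $\fG$ keeps $h_k$ in $\fM$, and post-composing by an automorphism does not alter any $\pi_j^{\cdot}$, so $h_k$ is canonical with $\pi_j^{h_k}=e_j$ for $j\leq k$. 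By construction, $(h_k)$ converges pointwise to some $h\in\fM$ (using closedness of $\fM$); canonicity and each idempotence condition $(\pi_j^{\cdot})^2=\pi_j^{\cdot}$ are closed in the topology of pointwise convergence and hold for $h_k$ once $k\geq j$, so they pass to $h$. Thus $h$ is canonical with $\pi_j^h$ idempotent for every $j$, hence range-rigid by the equivalence of the first paragraph. The main obstacle is the pointwise-convergence step: a naive limit of the powers $g^{N_k}$ need not converge on $G$, and the simultaneous use of the stabilization of the idempotents $e_j$ and of the $\fG$-invariance of $\fM$ is precisely what permits the inductive correction by $\alpha_k$ which forces pointwise convergence without destroying the idempotence already achieved at lower levels.
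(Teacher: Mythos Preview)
Your proof is correct and follows the same strategy as the paper: iterate the canonical $g$ so that its induced action on $k$-orbits becomes idempotent at every level, then extract a limit in $\fM$ using closedness and left-invariance under $\fG$. The only difference is that the paper obtains the limit by invoking compactness of the quotient $G^G/{\sim}$ (where $f\sim f'$ iff $f'\in\overline{\{\alpha\circ f:\alpha\in\fG\}}$) to get an accumulation point of $([g^{k_n}]_\sim)_{n\geq 1}$, whereas you unfold this into an explicit inductive choice of the correcting automorphisms $\alpha_k$; the two arguments are equivalent, and your divisibility condition $N_1\mid N_2\mid\cdots$ is harmless but not actually needed.
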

\begin{proof}
Pick any canonical function $g\in \fM$.  For every $n\geq 1$, there exist $k_n\geq 1$ such that $g^{k_n}\circ g^{k_n}[O]\subseteq g^{k_n}[O]$ for all $n$-orbits $O$ of $\fG$.
Let $\sim$ be the equivalence relation on $G^G$ defined by $f\sim f'$ if
$f'\in \overline{\{\alpha\circ f\;|\; \alpha\in\fG\}}$. It is known that $G^G/\sim$ is compact (see, e.g., Lemma~4 in~\cite{BodPin-CanonicalFunctions}).
The sequence $([g^{k_n}]_\sim)_{n\geq 1}$ thus has an accumulation point
in $G^G/\sim$, which means that there exists a sequence $(\alpha_n)_{n\geq 1}$ of elements of $\fG$ such that $(\alpha_n\circ g^{k_n})_{n\geq 1}$ has an accumulation point $h$ in $G^G$,
and \new{$h\in\fM$ since $\fM$ is closed}.
We then have that $h^2[O]\subseteq h[O]$ for all orbits $O$ of $\fG$. Since $h$ is also canonical with respect to $\fG$, this means that $h$ is range-rigid with respect to $\fG$.
\end{proof}

\begin{lemma}\label{lem:betterleftideal}
Let $\fM$ be a closed  transformation monoid containing a closed oligomorphic extremely amenable permutation group $\fG$. Then $\fM$ contains a minimal closed left ideal which contains a range-rigid function with respect to $\fG$.
\end{lemma}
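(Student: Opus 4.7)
The plan is to produce a minimal closed left ideal $\fN$ of $\fM$ that is also $\fG$-right-invariant, so that Lemmas~\ref{lem:canonisation} and~\ref{lem:range-rigid} can be applied inside $\fN$ to yield the desired range-rigid function. First, I would establish the existence of minimal closed left ideals of $\fM$ by Zorn's lemma: every closed left ideal $\mathscr{I}\subseteq\fM$ is $\fG$-left-invariant (because $\fG\subseteq\fM$) and hence $\sim$-saturated for the equivalence $\sim$ used in the proof of Lemma~\ref{lem:range-rigid}. The image of $\mathscr{I}$ in the compact Hausdorff quotient $G^G/\sim$ is then a non-empty closed subset, so a descending chain of such images has non-empty intersection by compactness, which lifts back to a non-empty intersection in $\fM$ and verifies the chain condition.

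To arrange the $\fG$-right-invariance I would appeal to extreme amenability of $\fG$. Right multiplication by $\fG$ is continuous on $\fM$ and preserves $\sim$-classes, so it descends to a continuous right action of $\fG$ on the compact Hausdorff space $\fM/\sim$. Extreme amenability supplies a fixed point $[f^*]$, meaning $f^*\alpha\in\overline{\fG f^*}$ for every $\alpha\in\fG$; consequently $\mathscr{L}:=\overline{\fM f^*}$ is a closed left ideal of $\fM$ that is $\fG$-invariant on both sides. Running the Zorn argument inside the subfamily of closed $\fG$-invariant left ideals of $\fM$ contained in $\mathscr{L}$ would then yield a minimal such ideal $\fN$.

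Once $\fN$ is a $\fG$-invariant closed left ideal minimal among such, Lemma~\ref{lem:canonisation} applied to any $g\in\fN$ produces a canonical function in $\overline{\fG g\fG}\subseteq\fN$, and Lemma~\ref{lem:range-rigid} applied to the closed non-empty $\fG$-invariant semigroup $\fN$ (which now contains a canonical function) yields a range-rigid function inside $\fN$. The hard part will be upgrading the minimality of $\fN$: a priori, $\fN$ might properly contain a closed left ideal $\mathscr{I}$ of $\fM$ that fails to be $\fG$-right-invariant but whose $\fG$-right-closure fills out $\fN$, so that $\fN$ is not minimal as a closed left ideal in the original sense. I expect this to be resolved by one further application of extreme amenability, to a compact Hausdorff $\fG$-space reflecting the internal left-ideal structure of $\fN$, and this is the technically delicate step of the argument.
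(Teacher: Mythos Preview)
Your plan diverges from the paper's argument, and the step you yourself flag as ``technically delicate'' is a genuine gap: there is no evident reason why a closed left ideal that is minimal \emph{among $\fG$-bi-invariant closed left ideals} should be minimal among \emph{all} closed left ideals of $\fM$, and your suggestion of ``one further application of extreme amenability'' is too vague to evaluate. The detour through a fixed point of the right $\fG$-action on $\fM/{\sim}$ is correct as far as it goes, but it does not buy you minimality in the sense the statement requires.

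The paper avoids this difficulty altogether by reversing the order of operations. It first picks \emph{any} minimal closed left ideal $\fN$ of $\fM$ (existence by the standard compactness argument), chooses $g\in\fN$, and then lets $\fS$ be the smallest closed $\fG$-bi-invariant \emph{semigroup} (not left ideal!) containing $g$. Lemmas~\ref{lem:canonisation} and~\ref{lem:range-rigid} apply to $\fS$ exactly as you intended to apply them to your $\fN$, yielding a range-rigid $h\in\fS$. The remaining point is that every element of $\fS$ lies in some minimal closed left ideal of $\fM$; this is what replaces your ``hard part'', and it is elementary. Indeed, the set $E=\{f\in\fM : \overline{\fM f}\text{ is minimal}\}$ is easily checked to be closed (if $f_n\to f$ with $f_n\in E$, then for any $m\in\fM$ and finite $F$, pick $n$ with $f_n|_F=f|_F$ and use $f_n\in\overline{\fM m f_n}$), to satisfy $E\fM\subseteq E$, and to be $\fG$-bi-invariant; since $g\in E$, the minimality of $\fS$ gives $\fS\subseteq E$, so $h\in E$.

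In short, rather than trying to force a minimal closed left ideal to be $\fG$-right-invariant, the paper lets the range-rigid function land in a possibly \emph{different} minimal closed left ideal (a right $\fG$-translate of $\fN$, or a limit of such), and this sidesteps your obstacle completely.
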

\begin{proof}
The fact that $\fM$ contains a minimal closed left ideal $\fN$ can be proved by a standard compactness argument (see~\cite{BKOPP, BKOPP-equations} for the proof).

Pick any $g\in \fN$, and let $\fS$ be the smallest non-empty closed transformation semigroup which contains $g$ and which is  invariant under $\fG$. By Lemma~\ref{lem:canonisation}, $\fS$ contains a canonical function with respect to $\fG$, and hence by Lemma~\ref{lem:range-rigid}, $\fS$ contains a range-rigid function with respect to $\fG$. It is easy to see that any element of $\fS$ belongs to a minimal closed left ideal of $\fM$.
\end{proof}

\subsection{Summary of the proof of Theorem~\ref{thm:main}}
\begin{proof}[Proof of Theorem~\ref{thm:main}] Applying Lemma~\ref{lem:betterleftideal} to the monoid $\fM:=\End(\sA)$ and the group $\fG:=\Aut(\sB)$, we get that $\End(\sA)$ contains a minimal closed left ideal which contains a range-rigid function $g$ with respect to $\Aut(\sB)$. By the $\omega$-categoricity and the homogeneity of $\sB$, the first-order reduct $\sA$ has a quantifier-free definition in $\sB$. Hence, $\sA_g$ is well-defined, and by Lemma~\ref{lem:core}, $\sA_g$ is the model-complete core of $\sA$. By definition, $\sA_g$ has a quantifier-free definition in $\sB_g$, and in particular is a first-order reduct thereof. Lemma~\ref{lem:omegacat} gives that $\sB_g$ is $\omega$-categorical, while Lemma~\ref{lem:Ramsey} tells us that $\sB_g$ is a homogeneous Ramsey structure.
Finally, by Lemma~\ref{lem:finitelybounded} it is finitely bounded if $\sB$ is.
\end{proof}

\ignore{
As announced in Section~\ref{sect:results}, our techniques can be adapted to give another proof of Theorem~\ref{thm:cores}.
We only give an outline here. The main idea is so define range-rigidity for a transformation semigroup $\fS$.
We define an \emph{orbit} of $n$-tuples to be a minimal non-empty set $X$ such that for all $a,b\in X$, there exist $t,t'\in\fS$ such that $t(a)=b$ and $t'(b)=a$.
A structure $\sA$ is \emph{weakly oligomorphic} if $\End(\sA)$
has finitely many orbits of $n$-tuples, for all $n\geq 1$.
Say that a structure $\sB$ is \emph{weakly homogeneous} if every partial isomorphism between finite substructures of $\sB$ extends to a self-embedding of $\sB$. Then it is easily seen that Lemmas~\ref{lem:AP}, \ref{lem:omegacat}, \ref{lem:core} hold when $\sB$ is assumed to be weakly homogeneous, and $g$ is range-rigid with respect to $\Emb(\sB)$.
Moreover, if $\fN$ is any minimal closed left ideal of $\Emb(\sB)$,
then any $g\in\fN$ is range-rigid with respect to $\Emb(\sB)$.
Finally, it suffices to notice that if $\sA$ is any weakly oligomorphic structure, then the structure $\sB$ whose relations are given by the orbits of $\End(\sA)$ is weakly homogeneous.}

\bibliographystyle{plain}

\bibliography{cores.bib}

\end{document}